\documentclass[12pt, leqno]{amsart}
\usepackage[T1]{fontenc}
\usepackage{a4, amsmath}
\usepackage{mathtools}
\usepackage{amssymb}
\usepackage{amsthm, amscd, mathdots}
\usepackage{enumerate}
\usepackage{hyperref}
\usepackage{cleveref}
\usepackage{datetime}
\usepackage{xcolor}
\usepackage{csquotes}

\usepackage[top=1in, bottom=1in, left=1in, right=1in]{geometry}

\theoremstyle{definition}
\newtheorem{defi}{Definition}[section]

\newtheorem{eg}[defi]{Example}

\theoremstyle{plain}
\newtheorem{prop}[defi]{Proposition}

\newtheorem{thm}[defi]{Theorem}
\newtheorem*{thm*}{Theorem}
\newtheorem*{prop*}{Proposition}
\newtheorem{cor}[defi]{Corollary}

\newtheorem{quess}[defi]{Questions}
\newtheorem*{ques*}{Question}

\theoremstyle{remark}
\newtheorem{rem}[defi]{Remark}

\crefname{thm}{theorem}{theorems}
\crefname{lem}{lemma}{lemmata}
\crefname{prp}{proposition}{propositions}
\crefname{cor}{corollary}{corollaries}
\crefname{obs}{observation}{observations}

\newcommand{\mc}{\mathcal}
\newcommand{\mf}{\mathfrak}
\newcommand{\mbb}{\mathbb}

\newcommand{\cc}{\mathbb C}
\newcommand{\rr}{\mathbb R}
\newcommand{\qq}{\mbb Q}

\newcommand{\zz}{\mbb Z}

\DeclareMathOperator{\Tr}{Tr}

\makeatletter
\newcommand*{\house}[1]{%
	\mathord{%
		\mathpalette\@house{#1}%
	}%
}
\newcommand*{\@house}[2]{%
	\dimen@=\fontdimen8 %
	\ifx#1\scriptscriptstyle\scriptscriptfont
	\else\ifx#1\scriptstyle\scriptfont
	\else\textfont\fi\fi
	3 %
	\sbox0{%
		$#1%
		\vrule width\dimen@\relax
		\overline{%
			\kern2\dimen@
			\begingroup 
			#2%
			\endgroup
			\kern2\dimen@
		}%
		\vrule width\dimen@\relax
		\mathsurround=1.5\dimen@ 
		$%
	}%
	\ht0=\dimexpr\ht0-\dimen@\relax
	\dp0=\dimexpr\dp0+2\dimen@\relax
	\vbox{%
		\kern\dimen@ 
		\copy0 %
	}%
}

\title{Failures of integral Springer's Theorem}
\author{Nicolas Daans}
\author{Vítězslav Kala}
\author{Jakub Krásenský}
\author{Pavlo Yatsyna}

\address{Charles University, Faculty of Mathematics and Physics, Department of Algebra, Soko\-lov\-sk\' a 83, 186~75 Praha~8, Czech Republic, and Department of Mathematics, Faculty of Science, KU Leuven, Celestijnenlaan 200B, 3001 Heverlee, Belgium}
\email[N.~Daans]{nicolas.daans@kuleuven.be}
\address{Charles University, Faculty of Mathematics and Physics, Department of Algebra, Soko\-lov\-sk\' a 83, 186~75 Praha~8, Czech Republic}
\email[V.~Kala]{vitezslav.kala@matfyz.cuni.cz}
\email[P.~Yatsyna]{p.yatsyna@matfyz.cuni.cz}
\address{Czech Technical University in Prague, Faculty of Information Technology, Department of Applied Mathematics, Thákurova~9, 160~00 Praha~6, Czech Republic}
\email[J.~Krásenský]{jakub.krasensky@fit.cvut.cz}
\date{\today}

\begin{document}
\begin{abstract}
We discuss the phenomenon where an element in a number field is not integrally represented by a given positive definite quadratic form, but becomes integrally represented by this form over a totally real extension of odd degree.
We prove that this phenomenon happens infinitely often, and, conversely, establish finiteness results about the situation when the quadratic form is fixed.
\end{abstract}

\makeatletter
\@namedef{subjclassname@2020}{%
  \textup{2020} Mathematics Subject Classification}
\makeatother
\subjclass[2020]{11E12, 11H55, 11R80}
\thanks{The research was supported by {Czech Science Foundation} grant 21-00420M (N. D., V. K.), and by {Charles University} programmes PRIMUS/24/SCI/010 (N. D., P. Y.) and UNCE/24/SCI/022 (P. Y.). \\
This is the accepted manuscript of the article published in Proc. Amer. Math. Soc. 153 (2025), pp.~2369-2379. \href{https://doi.org/10.1090/proc/17141}{https://doi.org/10.1090/proc/17141}}

\maketitle

\section{Introduction}
A famous theorem of Springer says that, given a quadratic form defined over a field $K$ and a field extension $L/K$ of odd degree, the form is isotropic over $K$ if and only if it is so over $L$, see, for example, \cite[Corollary 18.5]{ElmanKarpenkoMerkurjev} for a modern proof.
By standard (de)homogenisation arguments, this theorem can be rephrased alternatively as a statement about representation by quadratic forms:
\begin{thm*}[{Springer, 1952 \cite{Springer}}]
Let $K$ be a field, $Q \in K[X_1, \ldots, X_n]$ a quadratic form over $K$, $L/K$ a field extension of odd degree, $a \in K$.

If $a\in Q(L)$, then $a\in Q(K)$.
\end{thm*}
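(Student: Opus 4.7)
The plan is to exploit the (de)homogenisation trick alluded to in the excerpt, turning the hypothesis $a \in Q(L)$ into an isotropy statement for an auxiliary form in one extra variable, and then invoking Springer's isotropy theorem directly. No genuinely new ideas are needed; the content of the theorem lies entirely in the isotropy version.

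First I would dispose of two preliminary reductions. The case $a = 0$ is trivial since $Q(0, \ldots, 0) = 0$, so assume $a \neq 0$. By diagonalising $Q$ (and in characteristic $2$, using the analogous normal form) and discarding the zero-coefficient variables, I may also assume that $Q$ is nondegenerate, as omitting those variables does not alter the set $Q(K^n)$ or $Q(L^n)$. Now let $v \in L^n$ satisfy $Q(v) = a$, and introduce the auxiliary form $Q' \in K[X_0, X_1, \ldots, X_n]$ defined by
\[ Q'(X_0, X_1, \ldots, X_n) := Q(X_1, \ldots, X_n) - a X_0^2. \]
Then $(1, v) \in L^{n+1}$ is nonzero (its first coordinate is $1$) and satisfies $Q'(1, v) = 0$, so $Q'$ is isotropic over $L$. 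Springer's isotropy theorem then guarantees that $Q'$ is already isotropic over $K$: there exists $(x_0, x_1, \ldots, x_n) \in K^{n+1} \setminus \{0\}$ with $Q(x_1, \ldots, x_n) = a x_0^2$.

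If $x_0 \neq 0$, then $a = Q(x_1/x_0, \ldots, x_n/x_0) \in Q(K)$, as wanted. The subtlety I expect to be the main obstacle is the alternative: if $x_0 = 0$, we only learn that $Q$ itself is isotropic over $K$, not directly that $a$ is represented. Here I would invoke the classical fact that a nondegenerate isotropic quadratic form is \emph{universal}, i.e.\ represents every scalar. This is obtained by splitting off a hyperbolic plane $H$ from $Q$ and observing that $H$ (concretely, the form $XY$) represents any $a \in K$ via $X = 1$, $Y = a$. Hence $a \in Q(K)$ in this case too, completing the argument.
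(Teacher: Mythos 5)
Your argument is correct and is precisely the ``standard (de)homogenisation argument'' the paper invokes without spelling out: pass to $Q \perp \langle -a\rangle$, apply Springer's isotropy theorem, and handle the $x_0=0$ case via the fact that a nondegenerate isotropic form splits off a hyperbolic plane and is therefore universal. The paper gives no further proof (it cites Springer and \cite{ElmanKarpenkoMerkurjev}), so there is nothing to compare beyond noting that you have correctly filled in the intended reduction, including the one genuine subtlety (the isotropic-vector case with vanishing extra coordinate).
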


For a commutative ring $R$ and a quadratic form $Q \in R[X_1, \ldots, X_n]$,
we say that $a\in R$ is \emph{represented by $Q$ over $R$} if $a=Q(x_1,\dots,x_n)$ for some $x_1, \ldots, x_n \in R$, and we denote by $Q(R)$ the subset of $R$ of all represented elements.

In number theory, one is often concerned with \textit{integral} representations of quadratic forms, and it is natural to ask whether the analogue of the above theorem holds in this context.
More precisely, let $K$ be a number field (respectively a local field of characteristic $0$), and denote by $\mc{O}_K$ its ring of integers (respectively its discrete valuation ring).
We ask:
\begin{ques*}[``Integral Springer's Theorem for representations'', ISTR]
Let $K$ be a number field or a local field of characteristic $0$.
Let $Q \in \mc{O}_K[X_1, \ldots, X_n]$ be a quadratic form over $\mc{O}_K$, $L/K$ a field extension of odd degree, $a \in \mc{O}_K$.

If $a\in Q(\mc{O}_L)$, does it necessarily follow that also $a\in Q(\mc{O}_K)$?
\end{ques*}
A positive answer has been obtained in the case where $K$ is a local field, or $K$ is a number field and $Q$ is an indefinite quadratic form in at least $3$ variables; see \Cref{T:Springer-local} and \Cref{T:Springer-indefinite-global} below and the discussion that follows.
In particular, this applies whenever $K$ is not totally real.

This leaves just one case to consider: when $K$ is a totally real number field and $Q$ is a definite form.
We shall consider in particular the case where $Q$ is positive definite, i.e.~represents only totally positive elements of $K$.
Although some of the existing literature on this topic has correctly insinuated that the ISTR might fail in this case (i.e.~the above Question has a negative answer), see e.g.~\cite[Introduction]{HeSpringer} and \cite[Section 7]{XuSpringer}, to the best of the authors' knowledge, no concrete examples thereof have so far been provided or discussed in the literature, at least not when also the extension $L$ is assumed to be totally real.
This is the goal of the present note.

After introducing some basic concepts and notations, we start in \Cref{sect:general-observations} by briefly discussing the cases where a positive answer to ISTR is known for number fields and local fields.
These provide sufficient (local) conditions for ISTR to hold or fail in certain contexts, also for positive definite quadratic forms.
In \Cref{sec:finite}, we use these conditions and established facts from the theory of integral quadratic forms, to derive general finiteness results which bound the potential failure of ISTR for positive definite quadratic forms.
In particular, we show:

\begin{thm*}[see \Cref{C:finitely-many-degree-d-failures} and \Cref{P:finiteness-HKK}]
Let $K$ be a totally real number field, $d$ a positive integer, $n \geq 5$, and $Q \in \mc{O}_K[X_1, \ldots, X_n]$ be a positive definite quadratic form.
There exist only finitely many $a \in \mc{O}_K \setminus Q(\mc{O}_K)$ (up to multiplication by squares of units) and finitely many minimal totally real extensions $L/K$ of degree $2d+1$ such that
$a\in Q(\mc{O}_L)$.
\end{thm*}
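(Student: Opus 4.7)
The plan is to establish two finiteness claims in sequence: first, that only finitely many $a \in \mathcal{O}_K \setminus Q(\mathcal{O}_K)$ modulo $(\mathcal{O}_K^\times)^2$ can become represented over some totally real extension of $K$ of degree $2d+1$; and second, that for each such $a$ only finitely many minimal totally real $L/K$ of degree $2d+1$ satisfy $a \in Q(\mathcal{O}_L)$. The first claim reduces to \Cref{P:finiteness-HKK} via a local-global argument using \Cref{T:Springer-local}; the second follows from a Northcott-type bound on the coordinates of a representation of $a$ in $\mathcal{O}_L$.

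For the first claim, suppose $a \in Q(\mathcal{O}_L) \setminus Q(\mathcal{O}_K)$ with $L/K$ totally real of degree $2d+1$. At any finite place $v$ of $K$, the relation $\sum_{w \mid v}[L_w : K_v] = 2d+1$ forces at least one local degree $[L_w:K_v]$ to be odd; since $a \in Q(\mathcal{O}_{L_w})$ for this $w$, applying \Cref{T:Springer-local} to $L_w / K_v$ yields $a \in Q(\mathcal{O}_{K_v})$. At the archimedean places, $L$ being totally real and $Q$ being positive definite force $a$ to be totally non-negative. Hence $a$ is represented by $Q$ locally everywhere yet not globally, and \Cref{P:finiteness-HKK} — whose validity crucially needs the hypothesis $n \geq 5$ — bounds the number of such $a$ modulo $(\mathcal{O}_K^\times)^2$.

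For the second claim, fix one such $a$ and consider an arbitrary representation $a = Q(x_1, \ldots, x_n)$ with $x_i \in \mathcal{O}_L$. For each real embedding $\sigma$ of $K$, applying $\sigma$ to the coefficients of $Q$ yields a positive definite real quadratic form $Q_\sigma$ whose smallest eigenvalue $\lambda_\sigma$ is strictly positive. For any real embedding $\tau$ of $L$ extending $\sigma$,
\[
\sigma(a) \;=\; \tau(a) \;=\; Q_\sigma(\tau(x_1), \ldots, \tau(x_n)) \;\geq\; \lambda_\sigma \sum_{i=1}^n \tau(x_i)^2,
\]
so $\tau(x_i)^2 \leq \sigma(a)/\lambda_\sigma$, a bound independent of the choice of $L$. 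As $[L:\mathbb{Q}] = (2d+1)[K:\mathbb{Q}]$ is also bounded, each $x_i$ is an algebraic integer of bounded degree with all its archimedean conjugates bounded, so Northcott's theorem gives only finitely many possibilities for each $x_i$ and hence for the tuple $(x_1, \ldots, x_n)$. Minimality of $L$ forces $L = K(x_1, \ldots, x_n)$, so only finitely many $L$ arise in total.

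The principal input is the rank-$n\!\geq\!5$ statement \Cref{P:finiteness-HKK}; once this and the local \Cref{T:Springer-local} are in hand, the remaining work is a clean orchestration of local-global compatibility and a Northcott-type height finiteness. A minor technical point worth checking is that the two finitenesses interact correctly with the $(\mathcal{O}_K^\times)^2$-action: since $Q(ux)=u^2Q(x)$, replacing $a$ by $u^2a$ just rescales the $x_i$ by $u$, so modding out by units squared on the $a$-side is coherent with the bounding of the extensions $L$.
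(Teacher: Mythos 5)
Your proof is correct and follows essentially the same route as the paper: the finiteness of the exceptional classes $a\mc{O}_K^{\times 2}$ comes from showing $a$ is everywhere locally represented (via \Cref{T:Springer-local} and the fact that an odd sum of local degrees forces an odd local degree) and then invoking the Hsia--Kitaoka--Kneser local-global principle packaged in \Cref{P:finiteness-HKK}, while the finiteness of the minimal fields $L$ for each fixed $a$ comes from bounding all archimedean conjugates of the coordinates of a representation and applying Northcott, exactly as in \Cref{L:finitely-many-degree-d-failures}. The only cosmetic difference is that you derive the coordinate bound directly from the smallest eigenvalues of the real forms $Q_\sigma$ (which suffices here since $Q$ is a form on a free lattice), whereas the paper cites the equivalent house inequality of \cite[Lemma 4]{KY23}.
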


Similar statements exist for positive definite quadratic forms in $3$ or $4$ variables under more technical assumptions, see \Cref{rem:rank-3-4}. Moreover, a version of this statement for a fixed element $a$ actually holds without the assumption that the degree of $L/K$ is odd, see \Cref{L:finitely-many-degree-d-failures}.

In \Cref{sect:examples} we provide several classes of examples: we show that ISTR fails for quadratic forms over $\zz$ of all ranks $\geq 2$, even when restricting to totally real cubic Galois extensions, and for ranks $\geq 4$ we provide an infinite parametrized family of examples:
\begin{thm*}[see \Cref{C:infinite-family-Springer-failure}]
Let $k \in \zz^+$, $k \geq 4$ and let $K = \qq(\omega)$ where $\omega$ is a root of the irreducible polynomial $T^3+ T^2 - 2T - 1$. Let $Q(X_1,X_2,X_3,X_4)= X_1^2 + X_2^2 + X_3^2 + (8k+5)X_4^2$.

Then there exists a natural number $n$ such that $n\in Q(\mc O_K)$, but $n\not\in Q(\zz)$.
\end{thm*}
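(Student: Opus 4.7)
The plan is to exhibit, for each $k \geq 4$, an explicit natural number $n = n_k$ and verify (a) $n \notin Q(\zz)$ and (b) $n \in Q(\mc O_K)$.

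For (a), Legendre's three-square theorem characterizes $Q(\zz)$: since $Q$ is three squares plus $(8k+5) X_4^2$, we have $n \in Q(\zz)$ exactly when there exists $w \in \zz$ with $n - (8k+5)w^2 \geq 0$ and $n - (8k+5)w^2 \neq 4^a(8b+7)$. The task is to arrange $n_k$ so that for every $w \in \zz$ with $(8k+5) w^2 \leq n_k$, the value $n_k - (8k+5) w^2$ lies in $4^a(8b+7)$. Since $8k+5 \equiv 5 \pmod 8$ and $w^2 \pmod 8 \in \{0, 1, 4\}$, the residues of $(8k+5)w^2$ modulo $8$ cycle through $\{0,4,5\}$ as $w$ varies modulo $4$, so choosing $n_k$ in a carefully selected class modulo a small power of $2$ forces every admissible shift into $4^a(8b+7)$.

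For (b), the key structural fact about $K = \qq(\zeta_7)^+$ is that $2$ is inert in $\mc O_K$ (since $T^3 + T^2 - 2T - 1 \equiv T^3 + T^2 + 1 \pmod 2$ is irreducible over $\ff_2$), giving $\mc O_K / 2\mc O_K \cong \ff_8$. This weakens the $2$-adic obstructions that blocked representability over $\zz$. Moreover, $K/\qq$ is Galois cyclic of degree $3$, and for any $\alpha \in \mc O_K$ all conjugates $\sigma^i(\alpha)$ lie in $\mc O_K$, yielding the identity
\begin{equation*}
\alpha^2 + \sigma(\alpha)^2 + \sigma^2(\alpha)^2 = \Tr_{K/\qq}(\alpha^2) \in \zz,
\end{equation*}
which produces rational integers expressible as sums of three squares in $\mc O_K$. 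To obtain $n_k \in Q(\mc O_K)$ consistent with (a), one combines such a trace identity with a judiciously chosen $x_4 \in \mc O_K \setminus \zz$ so that the non-rational components of $\sum_i \sigma^i(\alpha)^2$ and $(8k+5) x_4^2$ cancel, leaving a rational integer on the right.

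The main obstacle is aligning the arithmetic constraint (a) with the algebraic construction (b). A pure trace identity with $x_4 \in \zz$ lands in $Q(\zz)$ automatically, so a genuine failure requires $x_4 \in \mc O_K \setminus \zz$ together with precise cancellation of the $\omega$ and $\omega^2$ components. Since $8k+5$ varies with $k$ while $K$ is fixed, one needs a single parametrized identity producing $n_k$ uniformly for every $k \geq 4$; this is precisely what motivates the concrete choice of $\qq(\zeta_7)^+$ (for the inert-$2$ property and tractable cyclic Galois action) together with the coefficient shape $8k+5$ (for the $2$-adic obstruction to representability over $\zz$).
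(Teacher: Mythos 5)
Your outline identifies the right general shape (a congruence obstruction over $\zz$ plus a genuinely non-rational representation over $\mc O_K$), but both halves have genuine gaps as written. For part (a), the claim that ``choosing $n_k$ in a carefully selected class modulo a small power of $2$ forces every admissible shift into $4^a(8b+7)$'' cannot be made to work by congruences alone: the form $\langle 1,1,1,8k+5\rangle$ is universal over every $\zz_p$ (e.g.\ for a target $t=4^a u$ with $u\equiv 7 \bmod 8$, the shift $t-(8k+5)\cdot 4^a$ has odd $2$-adic valuation and is therefore a $2$-adic sum of three squares), so no residue class of $n$ excludes all $w$. Concretely, with $m=8k+5$ and $n\equiv 28\bmod 32$ one gets $n-mw^2\equiv 8\bmod 32$ for $w\equiv 2\bmod 4$, which is always a sum of three squares. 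The missing idea is a \emph{size} constraint: the paper takes $3.25m<n<4m$, so that $n-mw^2<0$ for $|w|\geq 2$ and only $w^2\in\{0,1\}$ must be excluded, which the congruences $n\equiv 28\bmod 32$, $n-m\equiv 7\bmod 8$ then do. Without restricting $w$ by positivity, step (a) fails.

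For part (b), the proposed mechanism does not produce a representation. If $x_1,x_2,x_3$ are the conjugates of a single $\alpha$, then $x_1^2+x_2^2+x_3^2=\Tr_{K/\qq}(\alpha^2)\in\zz$, and then $(8k+5)x_4^2\in\zz$ forces $x_4^2\in\zz$ and hence $x_4\in\zz$ (a cubic field has no quadratic subfield), so one lands back in $Q(\zz)$; and for general $x_1,x_2,x_3$ you give no criterion for when the required cancellation is achievable. The paper's actual engine here is a local--global theorem for sums of three squares in $\mc O_{K_{49}}$ (Kuba's Proposition~3.1): the totally positive element $n-(8k+5)\omega^2$ (totally positive precisely because $\omega^2\prec 3.25$ and $n>3.25m$ --- another place where the size window is essential) is a sum of three squares in $\mc O_{K_{49}}$ provided $-(n-(8k+5)\omega^2)$ is not a square in $\zz_2[\omega]$, which is then checked by an explicit computation modulo $8$ in $\zz_2[\omega]$. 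Your observation that $2$ is inert in $K_{49}$ is correct and is implicitly what makes that single local computation suffice, but without citing or proving such a three-square criterion over $\mc O_{K_{49}}$, part (b) remains unestablished. Finally, note that no explicit $n_k$ is ever produced; the paper gets one for every $k\geq 4$ because the interval $(3.25m,4m)$ has length $0.75m>32$ (with $n=124$ handled separately for $k=4$), so it contains an integer $\equiv 28\bmod 32$.
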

On the other hand, we give an example showing that the local conditions which guarantee that ISTR holds, as established in \Cref{sect:general-observations}, are not necessary, see \Cref{T:15-representation}.

\subsection*{Acknowledgments} We thank Zilong He for a friendly email exchange when we were exploring the topic of this paper, and the referee for several helpful suggestions.

\section{Background}
\label{sect:general-observations}

For a number field $K$, we denote by $\mc{O}_K^+$ the subset of $\mc{O}_K$ of \textit{totally positive elements} (i.e. of those that are positive in all real embeddings; however, we impose no restrictions in complex embeddings), by $\mc{O}_K^\times$ the set of units of $\mc{O}_K$, and by $\mc{O}_K^{\times 2}$ the set of squares of units.
Note that $0 \not\in \mc{O}_K^+$.
In particular, we denote by $\zz^+$ the set of (non-zero) natural numbers.
For subsets $S_1, S_2 \subseteq K$ we write $S_1S_2 = \lbrace xy \mid x \in S_1, y \in S_2 \rbrace$.
For $x, y \in K$, we write $x \succ y$ or $y \prec x$ if $x - y$ is totally positive.

Given an integral domain $R$ with field of fractions $K$, by a \emph{quadratic $R$-lattice} we shall mean a pair $(\Lambda, Q)$ where $\Lambda$ is a finitely generated $R$-submodule of a finite-dimensional $K$-vector space, and $Q : \Lambda \to R$ is a quadratic map, i.e.~a map satisfying $Q(av) = a^2Q(v)$ for $a \in R$ and $v \in \Lambda$, and such that the associated map
$$ b_Q : \Lambda \times \Lambda \to R : (v, w) \mapsto Q(v+w) - Q(v) - Q(w)$$
is bilinear.

Given a quadratic lattice $(\Lambda, Q)$, the $K$-dimension of $K\Lambda$ is called the \emph{rank} of $(\Lambda, Q)$.
We call a quadratic lattice $(\Lambda, Q)$ \emph{degenerate} if there exists $v \in \Lambda$ such that $Q(v+w) = Q(w)$ for all $w \in \Lambda$, \emph{nondegenerate} otherwise.

When $(\Lambda, Q)$ is a quadratic lattice and $\Lambda$ is a free $R$-module, then we may identify $\Lambda$ with $R^n$ for some $n \in \zz^+$, and there exists a homogeneous quadratic polynomial $f \in R[X_1, \ldots, X_n]$ such that $Q\bigl((x_1, \ldots, x_n)\bigr) = f(x_1, \ldots, x_n)$ for all $(x_1, \ldots, x_n) \in R^n$.
In this case, we also call $Q$ a \emph{quadratic form}.
Note that all quadratic lattices over a field are free, so in this case, the study of quadratic lattices reduces to that of quadratic forms.

Given a quadratic lattice $(\Lambda, Q)$ and $a \in R$, we say that \emph{$Q$ represents $a$} if $a = Q(v)$ for some $v \in \Lambda$.
We denote by $Q(R)$ the set of elements of $R$ represented by $Q$. (It is more common to denote this by $Q(\Lambda)$, but we chose a more convenient notation for the purposes of this paper, as we are interested in extensions of scalars.)

Given an extension $S/R$ of integral domains, and a quadratic lattice $(\Lambda, Q)$ over $R$, there exists a unique quadratic map $Q_S : \Lambda \otimes_R S \to S$ that extends $Q$; we call $(\Lambda \otimes_R S, Q_S)$ the \emph{extension of scalars} of $(\Lambda, Q)$ from $R$ to $S$, and also denote $\Lambda_S = \Lambda \otimes_R S$.
We might simply write $Q(S)$ instead of $Q_S(S)$.
Clearly, we have $Q(R) \subseteq Q(S) \cap R$, and in general, the inclusion might be strict.
Springer's Theorem implies that, if $L/K$ is a field extension of odd degree and $Q$ a quadratic form over $K$, then $Q(K) = Q(L) \cap K$.

Given a number field $K$, 
we call a quadratic lattice $(\Lambda, Q)$ over $\mc{O}_K$ \textit{indefinite}  if $K$ is not totally real or there is an embedding $\sigma:K\rightarrow \rr$ such that $\sigma(Q(\mc{O}_K))\subset\rr$ contains both positive and negative elements.
If $K$ is totally real, we call $\Lambda$ \emph{positive definite} if $Q(v)$ is totally positive for all $v \in \Lambda \setminus \lbrace 0 \rbrace$. 
Note that a positive definite quadratic lattice is in particular nondegenerate.

For a discrete prime $\mf{p}$ of $K$, denote by $\mc{O}_{\mf{p}}$ the completion of $\mc{O}_K$ with respect to $\mf{p}$. When $\mf{p}$ is archimedean, then $\mc{O}_{\mf{p}}$ is the completion of $K$ with respect to $\mf{p}$, i.e. it is $\rr$ or $\cc$.
If $(\Lambda, Q)$ is a quadratic $\mc{O}_K$-lattice and $a \in \mc{O}_K$, we say that $a$ is \emph{locally represented by $Q$ over $\mc{O}_K$} if $a \in Q(\mc{O}_\mf{p})$ for all primes $\mf{p}$ of $K$ (both discrete and archimedean).
In general, this is not sufficient to conclude that $a \in Q(\mc{O}_K)$, but in some cases it is; a quadratic $\mc{O}_K$-lattice $(\Lambda, Q)$ such that every element of $\mc{O}_K$ which is locally represented by $Q$ lies in $Q(\mc{O}_K)$, is called \emph{regular}. 

\smallskip

We now discuss briefly the known results regarding ISTR which are relevant to our setup, without attempting to give a complete account of the history or of related results in the literature.

\begin{thm}[Local Integral Springer's Theorem for representations, \cite{XuSpringer, HeSpringer}]\label{T:Springer-local}
Let $K$ be a local field of characteristic $0$ and $L/K$ be a field extension of odd degree.
Let $(\Lambda, Q)$ be a quadratic $\mc{O}_K$-lattice.
Then $Q(\mc{O}_K) = Q({\mc{O}_L}) \cap \mc{O}_K$.
\end{thm}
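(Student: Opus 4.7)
The plan is to reduce to the non-archimedean case and then argue by Jordan decomposition of $\Lambda$, invoking classical Springer at the residue level together with Hensel's lemma. If $K$ is archimedean, then $K\in\{\rr,\cc\}$ and the only odd-degree finite extension is $L=K$ itself (since $[\cc:\rr]=2$), so the equality is trivial. Hence I may assume that $K$ is a non-archimedean local field of characteristic $0$, i.e.\ a finite extension of some $\qq_p$. The inclusion $Q(\mc{O}_K)\subseteq Q(\mc{O}_L)\cap \mc{O}_K$ is immediate, and the content lies in the reverse inclusion.

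Suppose $a\in \mc{O}_K$ and $a=Q(v)$ for some $v\in \Lambda_{\mc{O}_L}$; the goal is to produce $u\in\Lambda$ with $Q(u)=a$. Classical Springer's theorem already yields a vector $w\in K\Lambda$ with $Q(w)=a$, so only integrality of the solution is at stake. The natural strategy is to fix a uniformizer $\pi$ of $\mc{O}_K$ and decompose $\Lambda$ orthogonally into scaled unimodular sublattices via the Jordan decomposition (cf.\ O'Meara, \emph{Introduction to Quadratic Forms}); extension of scalars to $\mc{O}_L$ respects this decomposition up to the ramification index $e(L/K)$, which is odd since $[L:K]$ is.

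In residue characteristic different from $2$, I would reduce modulo $\pi$ and apply classical Springer's theorem to the residue-field extension $k_L/k_K$ (itself of odd degree): the image of $v$ in the reduction gives a representation of $a\bmod\pi$ by the reduced quadratic form over $k_L$, which descends to $k_K$ by the classical theorem, and Hensel's lemma then lifts this residual representation to an honest representation over $\mc{O}_K$ on the relevant unimodular block. Combining the arguments for each Jordan component and recombining yields the desired $u\in\Lambda$.

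The main obstacle is the dyadic case (residue characteristic $2$), where the Jordan decomposition may contain indecomposable binary blocks, the squaring map has vanishing derivative modulo $2$ so na\"{\i}ve Hensel lifting fails, and the classification of isometry classes of integral quadratic lattices is substantially more intricate. The appropriate tool is the Riehm--O'Meara refined decomposition together with a careful case analysis on binary and unary blocks, together with quadratic defect computations to control which units are squares. This is precisely where the bulk of the work in \cite{XuSpringer} and \cite{HeSpringer} lies, and I would defer to those references for this case rather than reproduce the analysis here.
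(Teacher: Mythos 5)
The paper does not prove this theorem at all: it is quoted as a known result, with the non-dyadic (and $2$-unramified dyadic) case attributed to \cite{XuSpringer} and the remaining dyadic ramified case to the recent \cite{HeSpringer}. So there is no in-paper argument to compare yours against; the honest benchmark is whether your proposal would stand as a self-contained proof, and it would not. Your reduction to the non-archimedean case is fine ($L=K$ when $K$ is archimedean and $[L:K]$ is odd), and your observation that $e(L/K)$ and $f(L/K)$ are both odd is the right starting point. But you explicitly defer the dyadic ramified case to the references, which is exactly the part that remained open for twenty years after \cite{XuSpringer}; a proposal that outsources the genuinely hard case is a reading guide, not a proof.

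Even in the non-dyadic case your sketch has a real gap at the step ``combining the arguments for each Jordan component and recombining yields the desired $u$.'' A representation $a=Q(v)$ over $\mc{O}_L$ splits as $a=\sum_i Q(v_i)$ along the Jordan components, and the individual summands $Q(v_i)$ need not lie in $\mc{O}_K$, need not be units times the expected power of the uniformizer, and can cancel against each other $\pi$-adically; so you cannot descend component by component. Likewise, ``reduce mod $\pi$, apply Springer over $k_L/k_K$, lift by Hensel'' only works cleanly when $a$ is a unit represented primitively by the unimodular block; for $a$ of positive valuation the reduction is the zero equation and carries no information. The standard fix is not Hensel plus residue Springer but the explicit O'Meara-style characterization of $Q(\mc{O}_K)$ in terms of the Jordan splitting (universality of unimodular lattices of rank $\geq 3$, discriminant conditions for binary blocks, square-class conditions for unary blocks), combined with the fact that for $e$ and $f$ odd the map $\mc{O}_K^\times/\mc{O}_K^{\times 2}\to\mc{O}_L^\times/\mc{O}_L^{\times 2}$ is injective. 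If you want a complete write-up you would need to carry out that case analysis; as it stands, your text identifies the right ingredients but proves neither the non-dyadic nor the dyadic case.
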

The above was first observed in the case where $K$ is either non-dyadic or $2$ is unramified in $K$ in \cite{XuSpringer}, using the local integral quadratic form theory developed by Riehm and O'Meara.
The missing case where $K$ is dyadic and $2$ is ramified was very recently solved in \cite{HeSpringer}.

The following globalization of \Cref{T:Springer-local} was proven for the case $K = \qq$ in \cite{XuSpringer} and also conjectured there to hold in general; the general case was then proven recently in \cite{HeSpringer} using the theory of base of norm generators (``BONGs'') as developed by Beli in among others~\cite{Beli03,Bel06,Beli10,Beli22}.
The proofs rely on a combination of local \Cref{T:Springer-local} with delicate computations of integral spinor norms.
\begin{thm}[Integral Springer's Theorem for representations by indefinite forms, \cite{XuSpringer, HeSpringer}]\label{T:Springer-indefinite-global}
Let $K$ be a number field and $L/K$ a field extension of odd degree.
Let $(\Lambda, Q)$ be an indefinite quadratic $\mc{O}_K$-lattice of rank at least $3$.
Then $Q({\mc{O}_K}) = Q({\mc{O}_L}) \cap \mc{O}_K$.
\end{thm}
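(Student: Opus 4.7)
My plan is to combine the Local Integral Springer's Theorem (\Cref{T:Springer-local}) with the local-to-global principle for representation by indefinite quadratic lattices of rank at least $3$, and then to rule out potential \emph{spinor exceptions} in rank~$3$ by exploiting the oddness of $[L:K]$. The inclusion $Q(\mc{O}_K) \subseteq Q(\mc{O}_L) \cap \mc{O}_K$ is trivial, so the content is the converse. Take $a \in Q(\mc{O}_L) \cap \mc{O}_K$. For each prime $\mf{p}$ of $K$, the identity $[L:K] = \sum_{\mf{P} \mid \mf{p}} [L_\mf{P}:K_\mf{p}]$ combined with the oddness of $[L:K]$ forces at least one $\mf{P}$ above $\mf{p}$ to have odd local degree. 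Since $a \in Q(\mc{O}_L) \subseteq Q(\mc{O}_\mf{P})$, \Cref{T:Springer-local} applied to $L_\mf{P}/K_\mf{p}$ gives $a \in Q(\mc{O}_\mf{p})$, so $a$ is locally represented by $Q$ at every prime of $K$.

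\textbf{From local to global, rank $\geq 4$.} For indefinite quadratic lattices of rank at least $4$, the Hsia--Kitaoka--Kneser theorem asserts that local representation at every prime implies global representation. The essence is strong approximation for the spin group of an indefinite form in rank $\geq 4$, which collapses genus and (proper) class for the purpose of representation. Applied to our locally represented $a$, this yields $a \in Q(\mc{O}_K)$.

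\textbf{From local to global, rank $3$.} In rank $3$, strong approximation still gives that $a$ is represented by some lattice in the spinor genus of $\Lambda$, but \emph{primitive spinor exceptions} may occur: elements locally represented by every lattice in the genus yet missed by $\Lambda$ itself. The obstruction to representing such an exception is governed by a finite $2$-primary quotient of the idele class group $C_K$ modulo a suitable product of integral spinor norm groups $\theta\bigl(O^+(\Lambda_\mf{p},a)\bigr)$. The natural extension-of-scalars map between the corresponding obstruction groups for $K$ and for $L$ is injective: its composition with the idele norm map $C_L \to C_K$ is multiplication by $[L:K]$, which is an automorphism of any $2$-primary group as $[L:K]$ is odd. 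Hence any nontrivial obstruction over $\mc{O}_K$ would survive over $\mc{O}_L$, contradicting the fact that $a \in Q(\mc{O}_L)$. The principal technical hurdle is the explicit determination of the integral spinor norms $\theta\bigl(O^+(\Lambda_\mf{p},a)\bigr)$ at every prime of $K$, especially at ramified dyadic primes where the structure of integral quadratic forms is most intricate; this is precisely the role of Beli's BONG formalism, whose systematic deployment is the main new ingredient of~\cite{HeSpringer} beyond the $K=\qq$ treatment of~\cite{XuSpringer}.
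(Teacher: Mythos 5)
The paper does not actually prove this theorem: it is imported from \cite{XuSpringer,HeSpringer}, with only the remark that for rank at least $4$ it can be deduced by combining \Cref{T:Springer-local} with \Cref{T:local-global-indefinite}. Your opening step --- descending local representability of $a$ from $L$ to $K$ via the identity $[L:K]=\sum_{\mf{P}\mid\mf{p}}[L_\mf{P}:K_\mf{p}]$ and the oddness of the degree --- is exactly the argument the paper gives for \Cref{O:only-local-failures}, and your rank $\geq 4$ conclusion coincides with the paper's own reduction; that part is complete modulo the two cited ingredients. (One small attribution slip: the indefinite local-global principle in rank $\geq 4$ is not the Hsia--Kitaoka--Kneser theorem --- that is the \emph{positive definite} almost-local-global result \cite{HKK} used in \Cref{P:finiteness-HKK}; the statement you need is \Cref{T:local-global-indefinite}, for which the paper cites \cite{Hsia-Representation-indefinite}.)

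The rank $3$ case is where the actual content of \cite{XuSpringer,HeSpringer} lies, and there your argument is a strategy outline rather than a proof. The claim that the extension-of-scalars map between the two obstruction groups is injective because its composite with the idele norm is multiplication by the odd number $[L:K]$ is a plausible transfer heuristic, but turning it into an argument requires (i) setting up the obstruction correctly --- for a ternary lattice, a locally represented $a$ is represented by every spinor genus in the genus unless $a$ is a spinor exception, and even then by exactly half of them, so one must compare the relative spinor norm groups $\theta\bigl(O^+(\Lambda_\mf{p},a)\bigr)$ over $K$ and over $L$ together with the associated quadratic extensions --- and (ii) actually computing these groups at every prime, above all at ramified dyadic primes. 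You name (ii) as the ``principal technical hurdle'' and defer it, but that deferral is precisely the theorem: it is the content of Beli's BONG machinery in \cite{HeSpringer} and of the computations in \cite{XuSpringer}. As a standalone proof the rank-$3$ part therefore has a genuine gap; as a description of how the cited proofs proceed it is accurate and consistent with the paper's own summary (``a combination of local \Cref{T:Springer-local} with delicate computations of integral spinor norms'').
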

As pointed out in \cite[Example 5.7]{HeSpringer}, the above result fails for lattices of rank $2$.
We note that the results of \cite{XuSpringer,HeSpringer} are actually more general than what we address in the above two theorems, since they consider not just representation of elements by quadratic lattices, but also representations of quadratic lattices by other quadratic lattices.
In fact, for quadratic lattices of rank at least $4$, \Cref{T:Springer-indefinite-global} can be deduced directly from  \Cref{T:Springer-local} via the following well-known local-global principle; see, for example, \cite{Hsia-Representation-indefinite} for an algebraic proof.

\begin{thm}[Local-global principle for integral representation by indefinite forms]\label{T:local-global-indefinite}
Let $K$ be a number field, $(\Lambda, Q)$ a nondegenerate indefinite quadratic $\mc{O}_K$-lattice of rank at least $4$, $a \in \mc{O}_K$.
Then $a \in Q({\mc{O}_K})$ if and only if $a$ is locally represented by $Q$ over $\mc{O}_K$.
\end{thm}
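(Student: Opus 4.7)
The forward direction is immediate from the definitions. For the backward direction, suppose $a \in \mc{O}_K$ is locally represented by $Q$ at every prime of $K$. The plan is two-step: first use the Hasse--Minkowski principle to produce a $K$-rational representation, and then upgrade it to an integral one via strong approximation for the spin group.

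For the first step, the hypothesis gives $a \in Q(\mc{O}_\mf{p}) \subseteq Q(K_\mf{p})$ at every place $\mf{p}$, so the Hasse--Minkowski theorem for representation by nondegenerate quadratic forms over number fields yields some $x_0 \in \Lambda \otimes_{\mc{O}_K} K$ with $Q(x_0) = a$. For each finite prime $\mf{p}$, pick $x_\mf{p} \in \Lambda_\mf{p}$ with $Q(x_\mf{p}) = a$; Witt's theorem over $K_\mf{p}$ then produces a local isometry $\phi_\mf{p} \in O(Q)(K_\mf{p})$ sending $x_0$ to $x_\mf{p}$. Since $x_0 \in \Lambda_\mf{p}$ for all but finitely many $\mf{p}$, at those primes I take $\phi_\mf{p} = \id$.

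The crux is then to find a global isometry $\phi \in O(Q)(K)$ that approximates each $\phi_\mf{p}$ closely enough, modulo the pointwise stabilizer of $x_0$, so that $\phi(x_0) \in \Lambda_\mf{p}$ at every finite prime $\mf{p}$; this forces $\phi(x_0) \in \Lambda$, giving the desired integral representation of $a$. The essential input here is Kneser's strong approximation theorem, applied to the simply connected semisimple group $\mathrm{Spin}(Q)$. The indefiniteness hypothesis is exactly what guarantees a non-compact factor of $\mathrm{Spin}(Q)(K_v)$ at some archimedean place $v$---either a complex place, or a real place where $Q$ has mixed signature---and the rank at least $4$ assumption is used to ensure that the pointwise stabilizer of $x_0$ in $O(Q)$ is itself the orthogonal group of a nondegenerate form of rank at least $3$ on $x_0^\perp$, whose spin cover is again simply connected semisimple with a non-compact factor at the same place $v$. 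Strong approximation therefore applies both to $\mathrm{Spin}(Q)$ and to this stabilizer.

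I expect the main obstacle to lie in the descent from the simply connected spin cover to the full orthogonal group: one must keep track of the spinor norms of the local $\phi_\mf{p}$ and adjust them (by pre-composition with elements of the stabilizer of $x_0$, which do not change $\phi_\mf{p}(x_0)$) so that the adelic spinor norm class of the assembled collection becomes trivial and the family can be lifted to $\mathrm{Spin}(Q)$ for purposes of global approximation. This spinor-norm bookkeeping, rather than the strong approximation theorem itself, is the delicate technical step, and is precisely what distinguishes the relatively clean rank $\geq 4$ statement from the more intricate situation in low rank.
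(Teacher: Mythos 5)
The paper does not actually prove this theorem: it quotes it as a classical result and points to \cite{Hsia-Representation-indefinite} for an algebraic proof. Your overall plan --- Hasse--Minkowski to get a $K$-rational solution, local Witt isometries moving it into each $\Lambda_\mf{p}$, and strong approximation for $\mathrm{Spin}(Q)$ to glue --- is indeed the classical Eichler--Kneser route, so the strategy is the right one.

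There is, however, a genuine gap, together with one incorrect structural claim. You assert that indefiniteness of $Q$ at some archimedean place $v$ guarantees that the pointwise stabilizer of $x_0$ (the orthogonal group of $Q|_{x_0^\perp}$) again has a noncompact spin factor at the same $v$, so that strong approximation applies to it as well. This can fail: if $Q$ has signature $(1,n-1)$ at a real place $v$ and $a$ is positive at $v$, then $Q|_{x_0^\perp}$ is negative definite at $v$ and its spin group is compact there; nothing forces the stabilizer to be isotropic at any archimedean place. The standard argument therefore does not run strong approximation inside the stabilizer. It splits into two steps: (i) an element represented by the genus is represented by every spinor genus in the genus once the rank is at least $4$ --- this is where the rank hypothesis enters, via the fact that for almost all finite $\mf{p}$ the spinor norm group of the integral rotations of the rank-$(n-1)$ local complement contains all units, which makes the relevant idele/double-coset computation close up; and (ii) for indefinite lattices of rank at least $3$, strong approximation for $\mathrm{Spin}(Q)$ shows each spinor genus consists of a single class, which is where indefiniteness enters. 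Your final paragraph correctly identifies the spinor-norm bookkeeping as the crux, but that bookkeeping is exactly step (i) --- Kneser's theorem on representations by spinor genera --- and it is left entirely undone; it is not a routine adjustment by elements of the stabilizer. So the proposal is a correct high-level strategy with the decisive step missing and with the auxiliary claim about the stabilizer needing to be removed or replaced.
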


We now turn our attention to the positive definite case.
We shall provide evidence in \Cref{sect:examples} via several examples that no obvious version of \Cref{T:Springer-indefinite-global} (ISTR) can hold when replacing indefinite quadratic lattices by positive definite quadratic lattices, and in fact we suspect that failure of ISTR for positive definite quadratic forms happens often.

These results on ISTR should be viewed alongside the \textit{lifting problem} for universal quadratic forms. Briefly, for a number field $K$, a quadratic $\mc{O}_K$-lattice is \textit{universal} if it represents all elements of $\mc{O}_K^+$;
the existence and properties of universal lattices have been widely studied, see e.g. the surveys \cite{VK,Ksur}.
The lifting problem then asks if it is possible for an $\mc{O}_K$-lattice to be universal over a larger field $L\supset K$.
Analogously to our expectations concerning ISTR,
when the lattice is indefinite, then a version of local-global principle holds and the extension of scalars is universal quite often \cite{HHX,XZ}.
However, in the positive definite case, this seems to happen very rarely \cite{KL,KY2,KY23,KY4}.

\bigskip

We conclude this section by pointing out that ISTR can only fail for elements which are locally represented over the base field. 

\begin{prop}\label{O:only-local-failures}
Let $L/K$ be an odd-degree extension of number fields, $(\Lambda,Q)$ a quadratic $\mc{O}_K$-lattice. If $a \in Q(\mc{O}_L) \cap \mc{O}_K$, then $a$ is locally represented by $Q$ over $\mc{O}_K$.

In particular, if $(\Lambda,Q)$ is a regular $\mc{O}_K$-lattice (e.g. if its class number is $1$), then ISTR holds for all odd-degree extensions of $K$.
\end{prop}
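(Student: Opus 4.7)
The plan is to reduce the global statement to the local integral Springer's theorem (\Cref{T:Springer-local}) by, at each prime $\mathfrak{p}$ of $K$, locating a prime $\mathfrak{P}$ of $L$ above $\mathfrak{p}$ for which the completion $L_\mathfrak{P}/K_\mathfrak{p}$ is again of odd degree. The key numerical observation is that for any prime $\mathfrak{p}$ of $K$ (archimedean or not),
\[
[L:K] \;=\; \sum_{\mathfrak{P} \mid \mathfrak{p}} [L_\mathfrak{P} : K_\mathfrak{p}],
\]
and since the left-hand side is odd, at least one local degree on the right must be odd.

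First I would fix $a \in Q(\mathcal{O}_L) \cap \mathcal{O}_K$ and a prime $\mathfrak{p}$ of $K$. Using the observation above, choose $\mathfrak{P} \mid \mathfrak{p}$ with $[L_\mathfrak{P} : K_\mathfrak{p}]$ odd. For non-archimedean $\mathfrak{p}$ the local field $K_\mathfrak{p}$ has characteristic zero, so \Cref{T:Springer-local} applies to the odd-degree extension $L_\mathfrak{P}/K_\mathfrak{p}$ and the quadratic $\mathcal{O}_{\mathfrak{p}}$-lattice $(\Lambda \otimes_{\mathcal{O}_K} \mathcal{O}_\mathfrak{p}, Q_{\mathcal{O}_\mathfrak{p}})$, yielding
\[
Q(\mathcal{O}_\mathfrak{p}) \;=\; Q(\mathcal{O}_\mathfrak{P}) \cap \mathcal{O}_\mathfrak{p}.
\]
Since $a \in Q(\mathcal{O}_L) \subseteq Q(\mathcal{O}_\mathfrak{P})$ and $a \in \mathcal{O}_K \subseteq \mathcal{O}_\mathfrak{p}$, we conclude $a \in Q(\mathcal{O}_\mathfrak{p})$.

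For archimedean $\mathfrak{p}$ the same choice of $\mathfrak{P}$ gives either $L_\mathfrak{P} = K_\mathfrak{p}$ (so that $a \in Q(\mathcal{O}_\mathfrak{P}) = Q(\mathcal{O}_\mathfrak{p})$ directly, noting that in the real case the odd-degree condition forces at least one real place of $L$ above a real place of $K$) or $K_\mathfrak{p} = \mathbb{C}$ and $L_\mathfrak{P} = \mathbb{C}$, where the conclusion is again immediate. This covers all primes and proves that $a$ is locally represented by $Q$ over $\mathcal{O}_K$.

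The second assertion is then essentially a tautology: by definition of a regular $\mathcal{O}_K$-lattice, every $a \in \mathcal{O}_K$ which is locally represented already lies in $Q(\mathcal{O}_K)$, and having class number $1$ is a classical sufficient condition for regularity. There is no substantial obstacle here; the only point that requires any care is the archimedean case (in particular checking that a real place of $K$ always has a real place of $L$ above it when $[L:K]$ is odd, which follows from the parity argument applied to the decomposition $[L:K] = r + 2c$).
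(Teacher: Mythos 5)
Your proposal is correct and follows essentially the same route as the paper: at each prime of $K$ you use the formula $[L:K] = \sum_{\mathfrak{P} \mid \mathfrak{p}} [L_\mathfrak{P}:K_\mathfrak{p}]$ to find a place of $L$ of odd local degree and then invoke the local integral Springer's theorem (the paper merely phrases this contrapositively and notes, as you do, that the archimedean case is handled by the same parity argument). The only cosmetic difference is that you spell out the archimedean case and the regularity tautology in more detail than the paper, which dismisses the latter as ``clear.''
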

\begin{proof}
Assume that, on the contrary, there exists a prime $\mf{p}$ of $K$ with $a \not\in Q({\mc{O}_{\mf{p}}})$. There exists a prime $\mf{q}$ of $L$ of odd degree over $\mf{p}$ (as the global degree is the sum of the local degrees, see e.g.~\cite[15:3(1)]{OMe00}); note that this argument works both for archimedean and discrete primes $\mf{p}$. Thus, by \Cref{T:Springer-local}, we get $a \not\in Q(\mc{O}_{\mf{q}})$. Hence, in particular, $a \not\in Q({\mc{O}_L})$, which is a contradiction.

The second part of the statement is clear.
\end{proof}

\section{Finiteness results}
\label{sec:finite}

\begin{thm}\label{L:finitely-many-degree-d-failures}
Let $d \in \zz^+$, $K$ a totally real  number field, $(\Lambda, Q)$ a positive definite quadratic $\mc{O}_K$-lattice, $a \in \mc{O}_K$.
There exist only finitely many totally real number fields $L$ of degree $d$ over $K$ such that $a \in Q({\mc{O}_L})$ but $a \not\in Q({\mc{O}_{L'}})$ for every proper subfield $L'$ of $L$ containing $K$.
\end{thm}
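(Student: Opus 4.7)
\smallskip

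\noindent\textbf{Proof plan.} The strategy is to bound the coefficients of any representing vector $v \in \Lambda \otimes_{\mc{O}_K} \mc{O}_L$ in a fixed basis of $K\Lambda$, uniformly in $L$. Such coefficients will turn out to be totally real algebraic integers of bounded degree with bounded real conjugates, hence to lie in a finite set, and the field $L$ will be recovered from them by invoking the minimality hypothesis.

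First, fix a $K$-basis $e_1, \ldots, e_n$ of $K\Lambda$, where $n$ is the rank of $\Lambda$; after rescaling the $e_i$, one may assume $\Lambda \subseteq \bigoplus_i \mc{O}_K e_i$, so that $\Lambda \otimes_{\mc{O}_K} \mc{O}_L \subseteq \bigoplus_i \mc{O}_L e_i$ for every extension $L/K$. Now suppose $L/K$ is totally real of degree $d$ and $v = \sum_i x_i e_i$ with $x_i \in \mc{O}_L$ satisfies $Q(v) = a$. Each real embedding $\sigma: L \to \rr$ restricts to some real embedding $\tau$ of $K$, and the form $Q^\tau$ on $\rr^n$ obtained from $Q$ by applying $\tau$ to the matrix entries in the basis $(e_i)$ is positive definite; moreover $(\sigma(x_1), \ldots, \sigma(x_n))$ lies on the compact ellipsoid $\{w \in \rr^n : Q^\tau(w) = \tau(a)\}$. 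This yields a uniform bound $|\sigma(x_i)| \le C$, where $C$ depends only on $K$, $Q$, $a$, and the chosen basis, but not on $L$.

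Second, each $x_i$ is a totally real algebraic integer of degree at most $d[K:\qq]$ over $\qq$ with all real conjugates bounded by $C$. Since the coefficients of its minimal polynomial are elementary symmetric functions of those conjugates, they too are bounded, so only finitely many such $x_i$ exist. The possible tuples $(x_1, \ldots, x_n)$ therefore form a finite set $\mc{F}$ depending only on $K, Q, a, d$. Setting $L'' := K(x_1, \ldots, x_n) \subseteq L$, the descent identity
\[ \Lambda \otimes_{\mc{O}_K} \mc{O}_{L''} \;=\; (\Lambda \otimes_{\mc{O}_K} \mc{O}_L) \cap (K\Lambda \otimes_K L'') \quad\text{inside}\quad K\Lambda \otimes_K L \]
shows $v \in \Lambda \otimes_{\mc{O}_K} \mc{O}_{L''}$, hence $a \in Q(\mc{O}_{L''})$. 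The minimality hypothesis then forces $L'' = L$, so $L = K(x_1, \ldots, x_n)$ is determined by a tuple in the finite set $\mc{F}$, leaving only finitely many possibilities for $L$.

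The main technical subtlety lies in the descent identity above: it is immediate when $\Lambda$ is a free $\mc{O}_K$-module, and in general it reduces to the free case by localizing at primes $\mf{p}$ of $\mc{O}_K$, where $\Lambda_{\mf p}$ becomes a free module over the discrete valuation ring $\mc{O}_{K,\mf p}$ and $\mc{O}_{L,\mf p} \cap L'' = \mc{O}_{L'',\mf p}$. Everything else is routine: positive definiteness gives the ellipsoidal bound, and the finiteness of totally real algebraic integers of bounded degree with bounded real conjugates is a classical observation going back to Kronecker.
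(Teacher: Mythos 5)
Your proof is correct and follows essentially the same route as the paper's: fix a basis with $\Lambda \subseteq \bigoplus_i \mc{O}_K e_i$, bound the coordinates of a representing vector uniformly in $L$ using positive definiteness (the paper cites \cite[Lemma 4]{KY23} for this house bound, where you argue directly via compactness of the ellipsoids), invoke finiteness of totally real algebraic integers of bounded degree with bounded conjugates (the paper cites Northcott), and use the minimality hypothesis to conclude $L = K(x_1, \ldots, x_n)$. Your explicit descent of the representing vector to $K(x_1, \ldots, x_n)$ via localization makes precise a step the paper treats implicitly, but this is a refinement rather than a different argument.
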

In the proof we shall use the concept of the \emph{house} of a totally real algebraic integer $\alpha$, denoted by $\house{\alpha}$ and defined as $\max_{i=1}^t \lvert \alpha_i \rvert$, where $\alpha_1, \ldots, \alpha_t$ are all the conjugates of $\alpha$.
\begin{proof}
Let $n$ be the rank of $\Lambda$.
There is a basis $\{v_1, \ldots, v_n\}$ of $K\Lambda$ such that  $\Lambda \subseteq \mc{O}_Kv_1 + \cdots + \mc{O}_Kv_n$.
It follows that, for any number field extension $L/K$, $\Lambda_{\mc{O}_L} \subseteq \mc{O}_Lv_1 + \cdots + \mc{O}_Lv_n$.

By \cite[Lemma 4]{KY23} there exists a fixed constant $C$ such that, for every totally real number field extension $L/K$ and for every $\beta_1, \ldots, \beta_n \in \mc{O}_L$, one has $C \cdot \house{Q(\beta_1v_1 + \cdots + \beta_nv_n)} > \max_{i=1}^n \house{\beta_i}^2$.

If now $L$ is a totally real number field containing $K$ such that $a \in Q({\mc{O}_L})$, then there exist $\beta_1, \ldots, \beta_n \in \mc{O}_L$ such that $a = Q(\beta_1v_1 + \cdots + \beta_nv_n)$; and if additionally $a \not\in Q({\mc{O}_{L'}})$ for every proper subfield $L'$, then $L$ must be generated by $\beta_1, \ldots, \beta_n$ over $K$. 
By the previous paragraph, $\beta_1, \ldots, \beta_n$ satisfy $\house{\beta_i}^2 < C\house{a}$.
As there are only finitely many totally real algebraic integers $\beta$ of degree at most $d \cdot [K : \qq]$ and with $\house{\beta}^2 < C\house{a}$ (see, for example, Northcott's Theorem \cite[Theorem 1]{Northcott-Inequality}),
we conclude that only finitely many such fields $L$ may exist.
\end{proof}

In \Cref{O:only-local-failures}, we have seen that the only elements for which ISTR can fail are those which are represented locally but not globally. The following proposition combines this with the almost local-global principle of Hsia, Kitaoka and Kneser to get a finiteness result:

\begin{prop}\label{P:finiteness-HKK}
Let $K$ be a totally real number field, $(\Lambda, Q)$ a positive definite quadratic $\mc{O}_K$-lattice of rank at least $5$.
There exists a finite set $S \subseteq \mc{O}_K^+ \setminus Q({\mc{O}_K})$ with the property that, for every field extension $L/K$ of odd degree, we have
$Q({\mc{O}_L}) \cap \mc{O}_K^+ \subseteq Q({\mc{O}_K}) \cup S\mc{O}_K^{\times 2}$, and if $L$ is not totally real, then we additionally have $(Q({\mc{O}_L}) \cap \mc{O}_K^+) \cup \lbrace 0 \rbrace = Q({\mc{O}_K}) \cup S\mc{O}_K^{\times 2}$.
\end{prop}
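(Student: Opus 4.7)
The plan is to combine the almost local--global principle of Hsia, Kitaoka, and Kneser (HKK) with \Cref{O:only-local-failures} for the inclusion, and then to use the local--global principle for indefinite forms (\Cref{T:local-global-indefinite}) to upgrade this to an equality when $L$ is not totally real.

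First, I would invoke HKK: for a positive definite $\mc{O}_K$-lattice of rank at least $5$, the set of $a \in \mc{O}_K^+ \setminus Q(\mc{O}_K)$ that are locally represented by $Q$ over $\mc{O}_K$ consists of only finitely many $\mc{O}_K^{\times 2}$-orbits. Let $S \subseteq \mc{O}_K^+ \setminus Q(\mc{O}_K)$ be a finite system of representatives. Since $Q(uv) = u^2 Q(v)$, the set $Q(\mc{O}_K)$ is stable under multiplication by $\mc{O}_K^{\times 2}$, so $S\mc{O}_K^{\times 2} \cap Q(\mc{O}_K) = \emptyset$.

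The inclusion then follows quickly: given an odd-degree extension $L/K$ and $a \in Q(\mc{O}_L) \cap \mc{O}_K^+$, \Cref{O:only-local-failures} shows that $a$ is locally represented by $Q$ over $\mc{O}_K$, so either $a \in Q(\mc{O}_K)$ or, by construction of $S$, $a \in S\mc{O}_K^{\times 2}$. For the reverse inclusion when $L$ is not totally real, positive definiteness gives $Q(\mc{O}_K) \subseteq \mc{O}_K^+ \cup \{0\}$ and $\mc{O}_K \subseteq \mc{O}_L$ gives $Q(\mc{O}_K) \subseteq Q(\mc{O}_L)$, so $Q(\mc{O}_K) \subseteq (Q(\mc{O}_L) \cap \mc{O}_K^+) \cup \{0\}$; moreover, $S\mc{O}_K^{\times 2} \subseteq \mc{O}_K^+$ is automatic. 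It thus suffices to establish $S \subseteq Q(\mc{O}_L)$, since then $su^2 = Q(uw)$ whenever $s = Q(w)$ takes care of the multiplication by $\mc{O}_K^{\times 2}$.

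For this last step, fix $s \in S$ and verify that $s$ is locally represented by $Q_L$ at every prime of $L$: at a discrete prime $\mf{q}$ of $L$ above $\mf{p}$ of $K$ one has $\mc{O}_{\mf{p}} \subseteq \mc{O}_{\mf{q}}$, so $s \in Q(\mc{O}_{\mf{p}}) \subseteq Q(\mc{O}_{\mf{q}})$; at a real archimedean prime of $L$ (which restricts to a real embedding of $K$), $s$ maps to a positive real, hence is represented by the positive definite form over $\rr$; at a complex archimedean prime, any nondegenerate quadratic form is surjective onto $\cc$. Because $L$ is not totally real, $(\Lambda_{\mc{O}_L}, Q_L)$ is an indefinite $\mc{O}_L$-lattice of rank at least $5 \geq 4$, so \Cref{T:local-global-indefinite} yields $s \in Q(\mc{O}_L)$. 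The main point requiring care is locating the precise formulation of HKK needed---finiteness modulo $\mc{O}_K^{\times 2}$ of locally-but-not-globally represented elements over a totally real base field, for positive definite lattices of rank $\geq 5$---since coarser equivalence relations sometimes appear in the literature; once this is in hand, the remainder is a clean combination of \Cref{O:only-local-failures} and \Cref{T:local-global-indefinite}.
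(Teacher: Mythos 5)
Your proposal is correct and follows essentially the same route as the paper: the finite set $S$ of representatives of locally-but-not-globally represented classes comes from the Hsia--Kitaoka--Kneser local--global principle, the inclusion is \Cref{O:only-local-failures}, and the equality for non-totally-real $L$ is \Cref{T:local-global-indefinite} applied to the now-indefinite lattice over $\mc{O}_L$. The extra details you supply (disjointness of $S\mc{O}_K^{\times 2}$ from $Q(\mc{O}_K)$, the archimedean checks over $L$, and the unit-square bookkeeping) are all consistent with, and slightly more explicit than, the paper's argument.
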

\begin{proof}
Let $S$ be a set which consists of exactly one representative from each set of the form $\alpha\mc{O}_K^{\times 2}$ where $\alpha \in \mc{O}_K^+ \setminus Q(\mc{O}_K)$ is locally represented by $Q$ over $\mc{O}_K$. The local-global principle established in \cite[Theorem 3]{HKK} implies that $S$ is finite. We check that such $S$ satisfies the statement: First, clearly $S \subseteq \mc{O}_K^{+} \setminus Q(\mc{O}_K)$. Further, by \Cref{O:only-local-failures}, every element of $Q(\mc{O}_L) \cap \mc{O}_K^{+}$ is locally represented by $Q$ over $K$; thus it either belongs to $Q(\mc{O}_K)$, or it is not globally represented and hence it lies in $S\mc{O}_K^{\times 2}$.
This shows that $Q({\mc{O}_L}) \cap \mc{O}_K^+ \subseteq Q({\mc{O}_K}) \cup S\mc{O}_K^{\times 2}$. 

Now assume additionally that $L$ is not totally real, and pick $a \in S$. By the choice of $S$, we have $a \in Q({\mc{O}_{\mf{p}}})$ for every prime $\mf{p}$ of $K$, and thus also for every prime of $L$. Since $Q$ is indefinite over $\mc{O}_L$, \Cref{T:local-global-indefinite} implies that $a \in Q({\mc{O}_L})$.
From this, it follows that $S\mc{O}_K^{\times 2} \subseteq Q(\mc{O}_L)$.
\end{proof}

\begin{cor}\label{C:finitely-many-degree-d-failures}
Let $K$ be a totally real number field, $(\Lambda, Q)$ a positive definite quadratic $\mc{O}_K$-lattice of rank at least $5$, $d \in \zz^+$.
There exist only finitely many totally real field extensions $L/K$ of degree $2d+1$ such that $Q(\mc{O}_L) \cap \mc{O}_K \neq Q(\mc{O}_K)$, but $Q(\mc{O}_{L'}) \cap \mc{O}_K = Q(\mc{O}_K)$ for every proper subfield $L'$ of $L$ containing $K$.
\end{cor}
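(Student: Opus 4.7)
The plan is to combine \Cref{P:finiteness-HKK} (which bounds the elements witnessing ISTR failure up to square units of $K$) with \Cref{L:finitely-many-degree-d-failures} (which, for each fixed element, bounds the minimal extensions representing it).

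First, I would apply \Cref{P:finiteness-HKK} to $(\Lambda, Q)$ to obtain a finite set $S \subseteq \mc{O}_K^+ \setminus Q(\mc{O}_K)$ with $Q(\mc{O}_L) \cap \mc{O}_K^+ \subseteq Q(\mc{O}_K) \cup S\mc{O}_K^{\times 2}$ for every odd-degree extension $L/K$. Now let $L/K$ be any totally real extension of degree $2d+1$ satisfying the hypotheses of the corollary, so there exists $a \in (Q(\mc{O}_L) \cap \mc{O}_K) \setminus Q(\mc{O}_K)$. Since $Q$ is positive definite and $L$ is totally real, $a \in \mc{O}_K^+$, hence by the choice of $S$ we may write $a = s u^2$ with $s \in S$ and $u \in \mc{O}_K^\times$.

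The next (and main) step is the transfer of minimality from $a$ to $s$. Clearly $s = Q(u^{-1}v)$ if $a = Q(v)$ for $v \in \Lambda_{\mc{O}_L}$, and since $u^{-1} \in \mc{O}_K^\times \subseteq \mc{O}_L^\times$, we have $s \in Q(\mc{O}_L)$. I would then check that $s \not\in Q(\mc{O}_{L'})$ for every proper subfield $L'$ of $L$ containing $K$: if such an $L'$ existed, then $u \in \mc{O}_{L'}^\times$ would yield $a = su^2 \in Q(\mc{O}_{L'}) \cap \mc{O}_K$, which equals $Q(\mc{O}_K)$ by hypothesis on $L$, contradicting $a \not\in Q(\mc{O}_K)$.

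Finally, for each $s \in S$, \Cref{L:finitely-many-degree-d-failures} (applied with the degree $2d+1$ and the element $s$) produces only finitely many totally real extensions $L/K$ of degree $2d+1$ such that $s \in Q(\mc{O}_L)$ and $s \not\in Q(\mc{O}_{L'})$ for every proper intermediate subfield $L'$. Taking the union of these finite sets over the finite set $S$ yields the desired finiteness, completing the proof. I do not expect any serious obstacle here: the argument is essentially a bookkeeping reduction, and the only nontrivial step is the minimality transfer above, which relies crucially on units of $K$ remaining units in every intermediate field.
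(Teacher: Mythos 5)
Your proposal is correct and follows exactly the same route as the paper: the paper's proof is a one-line combination of \Cref{P:finiteness-HKK} and \Cref{L:finitely-many-degree-d-failures}, and your write-up simply supplies the bookkeeping details (passing from $a$ to its representative $s$ modulo $\mc{O}_K^{\times 2}$ and transferring minimality via the fact that units of $K$ remain units in every intermediate field), all of which are handled correctly.
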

\begin{proof}
This follows by combining \Cref{P:finiteness-HKK} and \Cref{L:finitely-many-degree-d-failures}: \Cref{P:finiteness-HKK} gives the finite set $S$ of elements for which ISTR may fail and \Cref{L:finitely-many-degree-d-failures} states that the desired finiteness result holds for each of them separately.
\end{proof}
\begin{rem}\label{rem:rank-3-4}
If $K$ is a totally real number field, then for positive definite quadratic lattices over $\mc{O}_K$ of rank $3$ or $4$, a similar local-global principle as the one used in the proof of \Cref{P:finiteness-HKK} exists for the representation of \textit{square-free} totally positive elements, see, for example, Section 5 in \cite{Schulze}.
Using this, one can state a variation of \Cref{P:finiteness-HKK} and \Cref{C:finitely-many-degree-d-failures} also for lattices of rank $3$ or $4$.
\end{rem}
\begin{rem}
Note that \Cref{C:finitely-many-degree-d-failures} (even \Cref{L:finitely-many-degree-d-failures}) is false when one would allow also extensions $L/K$ where $L$ is not totally real, since in this case, by \Cref{T:local-global-indefinite}, any element of $\mc{O}_K$ that was represented everywhere locally but not globally, becomes represented over $\mc{O}_L$.
See e.g.~\Cref{T:15-representation} for a concrete example.
\end{rem}

\section{Failures of ISTR}\label{sect:examples}
We shall first construct an infinite family of quadratic forms over $\zz$ which represent a certain natural number over the ring of integers of a totally real cubic field, but not over $\zz$.

For an integral domain $R$, $n \in \zz^+$, and $a_1, \ldots, a_n \in R$, we shall denote by $\langle a_1, \ldots, a_n \rangle$ the \emph{diagonal quadratic form} given by $R^n \to R : (x_1, \ldots, x_n) \mapsto \sum_{i=1}^n a_ix_i^2$.
In particular, $\langle a_1, \ldots, a_n \rangle(R)$ denotes the set $Q(R)$ when $Q = \langle a_1, \ldots, a_n \rangle$.
We denote by $\perp$ the orthogonal sum of quadratic lattices.

Let us denote by $K_{49}$ the number field of discriminant $49$.
This is a cyclic cubic extension of $\qq$, in particular, it is totally real.
It can be defined alternatively as the splitting field of the polynomial $T^3 + T^2 - 2T - 1$ over $\qq$, or as the maximal totally real subfield of the seventh cyclotomic field, i.e.\ the field generated over the rationals by $\omega = \zeta_7 + \zeta_7^{-1}$, where $\zeta_7$ is a primitive $7$th root of 1.

\begin{prop}\label{P:infinite-family-Springer-failure}
Let $n, m \in \zz^+$ such that $n \equiv 28 \bmod 32$, $m \equiv 5 \bmod 8$, and $3.25m < n < 4m$.
Then
\begin{enumerate}
\item $n \not\in \langle 1, 1, 1, m \rangle(\zz)$, but
\item $n - m\omega^2 \in \langle 1, 1, 1 \rangle(\mc{O}_{K_{49}})$; in particular $n \in \langle 1, 1, 1, m \rangle(\mc{O}_{K_{49}})$.
\end{enumerate}
\end{prop}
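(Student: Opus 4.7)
For part~(1), the upper bound $n < 4m$ forces any integer solution of $x_1^2 + x_2^2 + x_3^2 + mx_4^2 = n$ to satisfy $mx_4^2 \leq n < 4m$, hence $x_4 \in \{0, \pm 1\}$. The case $x_4 = 0$ would require $n$ to be a sum of three integer squares, but $n \equiv 28 \pmod{32}$ means $n = 4(8k+7)$ for some $k \in \nat$, which is of the form $4^a(8b+7)$ forbidden by Legendre's three-square theorem. The case $x_4 = \pm 1$ would require $n - m$ to be a sum of three integer squares, but $n - m \equiv 28 - 5 \equiv 7 \pmod 8$ is likewise forbidden. Hence $n \not\in \langle 1, 1, 1, m\rangle(\zz)$.

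For part~(2), the first task is to certify total positivity of $n - m\omega^2$. Direct evaluation of the minimal polynomial $T^3 + T^2 - 2T - 1$ at $T = 5/4$ gives $1/64 > 0$, so its largest root lies strictly between $1$ and $5/4$. Writing $\omega_j = 2\cos(2\pi j/7)$ and using the identity $(2\cos\theta)^2 = 2 + 2\cos 2\theta$ yields the cyclic relations $\omega_1^2 = 2 + \omega_2$, $\omega_2^2 = 2 + \omega_3$, $\omega_3^2 = 2 + \omega_1$, and therefore $\max_i \omega_i^2 = 2 + \omega_1 < 13/4$. Combined with the hypothesis $n > 13m/4$, this gives $n - m\omega_i^2 > 0$ at every real embedding, so $n - m\omega^2 \in \mc O_{K_{49}}^+$.

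The core of part~(2) is to exhibit $\alpha, \beta, \gamma \in \mc O_{K_{49}} = \zz[\omega]$ with $\alpha^2 + \beta^2 + \gamma^2 = n - m\omega^2$; the ``in particular'' then follows by setting $x_4 = \omega$. I would proceed via local--global analysis. At every non-dyadic prime $\mf p$ of $K_{49}$, the unimodular form $\langle 1, 1, 1\rangle$ is universal over $\mc O_{K_{49}, \mf p}$, as one sees by Hensel-lifting from the residue field of odd characteristic where the ternary form is nondegenerate. Since the order of $2$ modulo $7$ equals $3$, the prime $2$ is inert in $K_{49}$, so the only nontrivial local condition lies at the unique prime $\mf p_2$, whose completion $F$ is the unramified cubic extension of $\qq_2$. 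Using the congruences $n \equiv 28 \pmod{32}$ and $m \equiv 5 \pmod 8$, one verifies that $n - m\omega^2 \in \langle 1, 1, 1\rangle(\mc O_F)$; a regularity (class-number-one) statement for the lattice $\langle 1, 1, 1\rangle$ over $\mc O_{K_{49}}$ then promotes the local representation to a global one.

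The main obstacle is the $2$-adic local analysis together with the global lifting step. Over $\ff_8$ the residue form $x^2 + y^2 + z^2$ collapses to $(x + y + z)^2$ of rank~$1$, so local representability cannot be read off by naive reduction modulo $\mf p_2$; one must instead track square classes in $F^\times / F^{\times 2}$, exploit that $\langle 1, 1, 1\rangle$ remains anisotropic over $F$ (since the odd-degree extension $F/\qq_2$ cannot split the Hamilton quaternion algebra, $-1$ is still not a sum of two squares in $F$), and carry out the integral analysis modulo sufficiently high powers of $\mf p_2 = 2\mc O_F$. The precise congruences $n \equiv 28 \pmod{32}$ and $m \equiv 5 \pmod 8$ are calibrated so as to place $n - m\omega^2$ into $\langle 1, 1, 1\rangle(\mc O_F)$ while simultaneously excluding $n$ and $n - m$ from $\langle 1, 1, 1\rangle(\zz)$.
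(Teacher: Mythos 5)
Part~(1) and the total-positivity step of part~(2) are correct and essentially identical to the paper's argument (the paper reduces mod $32$ and mod $8$ directly rather than quoting Legendre's three-square theorem, and simply records $\omega^2 \prec 3.25$, but these are cosmetic differences). Your overall strategy for the rest of part~(2) — reduce to a single dyadic condition at the inert prime above $2$ and then globalize via a regularity statement for $\langle 1,1,1\rangle$ over $\mc{O}_{K_{49}}$ — is also the same strategy the paper follows: it cites a result of exactly this shape (a totally positive $\alpha \in \mc{O}_{K_{49}}$ is a sum of three squares if and only if $-\alpha$ is not a square in $\zz_2[\omega]$) and then carries out the dyadic verification.

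The problem is that you execute neither of the two steps that constitute the actual content of part~(2). First, the regularity/class-number-one statement for $\langle 1,1,1\rangle$ over $\mc{O}_{K_{49}}$ is a genuinely nontrivial global fact about the class number of that lattice; it does not follow from the local analysis you sketch, and you neither prove it nor point to a source for it. Second, and more importantly, the assertion that ``one verifies that $n - m\omega^2 \in \langle 1,1,1\rangle(\mc{O}_F)$'' is precisely the computation that the congruences $n \equiv 28 \bmod 32$ and $m \equiv 5 \bmod 8$ are there to feed, and you never carry it out — you only (correctly) list the obstacles: reduction mod $\mf{p}_2$ is useless because the residue form degenerates, so one must work modulo higher powers of $2$. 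The paper resolves this by writing a putative square root of $m\omega^2 - n$ as $\sum_{i \ge 0} u_i 2^i$ with digits $u_i$ in a fixed set of $8$ representatives, pinning down $u_0 = \pm\omega$ by reduction mod $2$, and then deriving a contradiction mod $8$ by checking that no digit $u_1$ satisfies $u_1^2 + \omega u_1 \equiv 1 + \omega^2 \bmod 2$. Without some such explicit mod-$8$ computation (or an equivalent criterion for representability by $\langle 1,1,1\rangle$ over the unramified cubic extension of $\zz_2$, which you also do not state), the proof of part~(2) is incomplete.
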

\begin{proof}
First, assume that $n \in \langle 1, 1, 1, m \rangle(\zz)$, i.e.~$n = w^2 + x^2 + y^2 + mz^2$ for certain $w, x, y, z \in \zz$.
By the assumption that $n < 4m$, this is possible only if $z^2 \in \lbrace 0, 1 \rbrace$, since otherwise $n - mz^2$ would be negative.
But this means either $n = w^2 + x^2 + y^2$ or $n - m = w^2 + x^2 + y^2$, and by reducing modulo $32$ (respectively, $8$) one sees that neither option is possible.
We conclude that $n \not\in \langle 1, 1, 1, m \rangle(\zz)$.

We now show that $n - m\omega^2 \in \langle 1, 1, 1 \rangle(\mc{O}_{K_{49}})$.
Note that $\omega^2 \prec 3.25$, which implies that $n - m\omega^2$ is totally positive.
By \cite[Proposition 3.1]{KubaCubicPythagoras} it now suffices to show that $-(n - m\omega^2)$ is not a square in $\zz_2[\omega]$.
Note that an arbitrary element of $\zz_2[\omega]$ can be written as $\sum_{i=0}^{+\infty} u_i 2^i$ with $u_i \in S = \lbrace 0, 1, \omega, \omega^2, 1 + \omega, 1 + \omega^2, \omega + \omega^2, 1 + \omega + \omega^2 \rbrace$.
For the sake of a contradiction, suppose we had $m\omega^2 - n = x^2$ for some $x = \sum_{i=0}^{+\infty} u_i 2^i \in \zz_2[\omega]$ with $u_i \in S$.
By reducing this equation modulo $2$, one sees that $u_0^2 \equiv \omega^2 \bmod 2$ and thus $u_0 = \pm\omega$; replacing $x$ by $-x$ if needed, we may assume $u_0 = \omega$.
But now we compute that
$$\omega^2 + 4(u_1^2 + \omega u_1) \equiv (\omega + 2u_1)^2 \equiv x^2 \equiv m\omega^2 - n \equiv 5\omega^2 + 4 \equiv \omega^2 + 4(1+\omega^2) \bmod 8.$$
We thus need $u_1 \in S$ to satisfy $u_1^2 + \omega u_1 \equiv 1 + \omega^2 \bmod 2$, and one verifies by hand that this holds for none of the $8$ elements of $S$.
\end{proof}
\begin{thm}\label{C:infinite-family-Springer-failure}
Let $k \in \zz^+$, $k \geq 4$.
Then there exists $n \in \zz^+$ such that $\langle 1, 1, 1, 8k+5 \rangle$ represents $n$ over $\mc{O}_{K_{49}}$ but not over $\zz$.
In particular, there exist infinitely many quadratic forms of rank $4$ over $\zz$ which are pairwise non-isometric over $\qq$ and 
for which ISTR fails.
\end{thm}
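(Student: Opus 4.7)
The plan is to apply \Cref{P:infinite-family-Springer-failure} with $m = 8k+5$, which automatically satisfies $m \equiv 5 \bmod 8$. For each fixed $k \geq 4$, the task then reduces to producing some $n \in \zz^+$ with $n \equiv 28 \bmod 32$ lying in the open interval $(3.25m, 4m)$; the proposition will then immediately yield that $\langle 1,1,1,8k+5\rangle$ represents $n$ over $\mc{O}_{K_{49}}$ but not over $\zz$, establishing the first assertion of the theorem.

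A natural candidate is $n = 32k - 4$, which is manifestly congruent to $28$ modulo $32$. The upper bound $n < 4m = 32k+20$ holds trivially, while the lower bound $3.25m < n$ unpacks to the linear inequality $26k + 16.25 < 32k - 4$, whose solution set is $k > 3.375$. Hence this choice works precisely for $k \geq 4$, matching the hypothesis of the theorem. I expect this brief arithmetic check to be the only real calculation in the proof.

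For the second assertion about infinitely many pairwise non-isometric forms, I would invoke the standard fact that the discriminant of $\langle 1,1,1,m\rangle$ is the class of $m$ in $\qq^\times/\qq^{\times 2}$, so two such forms can be isometric over $\qq$ only when the corresponding values of $m$ share the same squarefree part. By Dirichlet's theorem on primes in arithmetic progressions, the set $\{8k+5 : k \geq 4\}$ contains infinitely many primes, which are pairwise distinct squarefree integers, and these yield infinitely many forms $\langle 1,1,1, 8k+5\rangle$ pairwise non-isometric over $\qq$, each of which fails ISTR by the first assertion. Since \Cref{P:infinite-family-Springer-failure} carries essentially all the weight of the argument, I do not foresee any substantive obstacle beyond the two short verifications above.
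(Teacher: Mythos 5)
Your proposal is correct and follows essentially the same route as the paper: both reduce to \Cref{P:infinite-family-Springer-failure} with $m = 8k+5$ and produce an $n \equiv 28 \bmod 32$ in $(3.25m, 4m)$, then distinguish the forms over $\qq$ by their determinant classes. Your explicit choice $n = 32k-4$ is a clean uniform formula (it even recovers the paper's $n=124$ at $k=4$, where the paper instead splits into the case $k=4$ and a counting argument for $k>4$), and invoking Dirichlet for primes $\equiv 5 \bmod 8$ is a valid, if slightly heavier, way to obtain the infinite non-isometric family where the paper simply picks $k_i$ with $(8k_i+5)(8k_j+5)$ never a square.
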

\begin{proof}
For the first part, observe that, for $k = 4$ and thus $m = 8k+5=37$, one can choose $n = 124$ and conclude with \Cref{P:infinite-family-Springer-failure}.
For $k > 4$, there are more than $32$ natural numbers between $3.25m$ and $4m$, hence in particular some element $n \equiv 28 \bmod 32$; as such one can again apply \Cref{P:infinite-family-Springer-failure}.

For the second part, observe, by comparing determinants, that the forms $\langle 1, 1, 1, 8k+5 \rangle$ and $\langle 1, 1, 1, 8l+5 \rangle$ are not isometric over $\qq$ if $(8k+5)(8l+5)$ is not a square, i.e.~there cannot be a linear change of variables over $\qq$ transforming one form into the other.
And clearly, one can find infinitely many positive integers $k_1, k_2, k_3, \ldots$ such that $(8k_i + 5)(8k_j + 5)$ is never a square for $i \neq j$.
\end{proof}
\begin{rem}
\Cref{C:infinite-family-Springer-failure} implies that, for every $D \in \zz^+$ with $D \geq 4$, there exist infinitely many quadratic forms over $\zz$ of rank $D$ which are pairwise non-isometric over $\qq$ and for which ISTR fails.
It suffices to observe that, if $Q$ is a positive definite form over $\zz$ and $a \in Q({\mc{O}_K}) \setminus Q(\zz)$, then also for $Q' = Q \perp \langle a + 1 \rangle$ we have $a \in Q'({\mc{O}_K}) \setminus Q'(\zz)$.
\end{rem}

\begin{eg}
Consider the setup of \Cref{P:infinite-family-Springer-failure} but replace the form $\langle 1, 1, 1, m \rangle$ by $\langle 1, 1, m \rangle$.
Numerical data suggests that a similar result should hold: for many $m \geq 7$, there seems to be some $n$ slightly larger than $3.25m$ such that $n \not\in \langle 1, 1, m \rangle(\zz)$ but $n- m\omega^2 \in \langle 1, 1 \rangle(\mc{O}_{K_{49}})$. For $m=2p$ and $m=4k+2$, we verified this numerically for the first 200 primes $p$ greater than three and natural numbers $k\le 200$. 

Presumably analytical techniques could be used to show that this happens infinitely often.
\end{eg}
\begin{eg}
We note that ISTR can even fail for binary quadratic forms: the form $\langle 1, 71 \rangle$ does not represent $232$ over $\zz$, but in $\mc{O}_{K_{49}}$ we have $(4\omega^2 - \omega - 16)^2 + 71\omega^2 = 232$ for $\omega = \zeta_7 + \zeta_7^{-1}$.

Furthermore, observe that, when $\alpha, \beta$ are any totally real algebraic integers with $\alpha^2+71\beta^2=232$, then $\frac{232}{71} \succ \beta^2$.
As $\frac{232}{71} < 4$, it follows from Kronecker's Theorem \cite{Kro57} that, up to conjugation, $\beta^2 = 2 + 2\cos(\frac{2\pi}{n})$ for some $n \in \zz^+$.
Since $2+2\cos(\frac{2\pi}{n}) < \frac{232}{71}$ if and only if $n \leq 7$, one obtains
that this forces $\beta^2 \in \lbrace 0, 1, 2, \frac{3+\sqrt{5}}{2}, 3, \omega^2 \rbrace$.
Thus, up to conjugation and switching signs, the only possibilities for the pair $(\alpha, \beta)$ are $(\sqrt{232}, 0)$, $(\sqrt{161}, 1)$, $(\sqrt{90} ,\sqrt{2})$, $\bigl(\sqrt{\frac{1}{2}(251{-}71\sqrt{5})}, \frac{1+\sqrt{5}}{2}\bigr)$, $(\sqrt{19}, \sqrt{3})$, and $( 4\omega^2 - \omega - 16, \omega)$.
In particular, $K_{49}$ is the unique minimal totally real number field of odd degree in which the equation $x^2 + 71y^2 = 232$ has an integral solution.
\end{eg}

Further, we show that ISTR fails fairly often: For a (quite arbitrarily chosen) quadratic form over $\qq$, it fails for the majority of cubic fields with small discriminant.

\begin{eg}
Consider the form $Q=\langle 1,1,14 \rangle$ over $\zz$ and the first $100$ totally real cubic fields, ordered by discriminant.
One can compute that if $K$ is one of these fields and $\mathrm{disc} (K) \neq 43^2$, then ISTR fails for $Q$ over the extension $K/\qq$. For $K$ with discriminant $43^2$, we do not know whether ISTR fails for $Q$.

More precisely: There are $54$ natural numbers under $1500$ which are locally but not globally represented by $Q$ over $\zz$; the smallest is $3$, the largest is $1428$.
These are all represented by $\langle 1,2,7 \rangle$, which is everywhere locally isometric to $\langle 1, 1, 14 \rangle$.
One verifies numerically that, for every totally real cubic extension $K/\qq$ where $\mathrm{disc} (K) \leq 3132$ and $\mathrm{disc} (K) \neq 43^2$, there is at least one of these $54$ exceptional numbers which becomes represented by $Q$ over $\mc{O}_K$. (For the field with $\mathrm{disc} (K)=43^2$, there is no natural number under $4000$ which becomes represented over $\mc{O}_K$.)
In particular, ISTR fails for $Q$ in these $99$ cubic extensions.
\end{eg}

The following is an illustration for a failure of ISTR over a base field other than $\qq$.

\begin{eg}
Let $K = \qq(\sqrt{21})$ and let $L =\qq(\vartheta)$, where $\vartheta=\zeta_{21}+\zeta^{-1}_{21}$, i.e.~$L$ is the maximal totally real subfield of the $21$st cyclotomic field. It is of degree $6$ as $\vartheta$ has minimal polynomial $T^6 - T^5 - 6T^4 + 6T^3 + 8T^2 - 8T + 1$ over $\qq$.
Then $\sqrt{21} = 2\vartheta^5 + 2\vartheta^4 - 10\vartheta^3 - 8\vartheta^2 + 12\vartheta + 3 \in L$, and $L/K$ is an extension of degree $3$.

One verifies that the element $7 + (\frac{1+\sqrt{21}}{2})^2$ is not a sum of $4$ squares in $\mc{O}_K = \zz[\frac{1+\sqrt{21}}{2}]$, but it is a sum of $4$ squares in $\mc{O}_L = \zz[\vartheta]$, since it is equal to
$$ 1^2 + (\vartheta - 1)^2 + (\vartheta^2 + \vartheta - 2)^2 + (\vartheta^5 - \vartheta^4 - 7\vartheta^3 + 5\vartheta^2 + 12\vartheta - 5)^2.$$
\end{eg}

Many other examples could be produced. For example, one can check that for $\alpha=12+2\sqrt{13}$ and the degree 3 field extension $\qq(\zeta_{13}+\zeta_{13}^{-1}) / \qq(\sqrt{13})$, one gets $\alpha \notin \langle 1, 1, 1, 1 \rangle \bigl(\zz\bigl[\frac{1+\sqrt{13}}2\bigr]\bigr)$ but $\alpha \in \langle 1, 1, 1, 1 \rangle \bigl(\zz[\zeta_{13}+\zeta_{13}^{-1}])$.

\smallskip

We conclude with an example illustrating that the fact that a positive definite quadratic form over $\zz$ is not regular (i.e.~there exist elements in $\zz^+$ represented by the form everywhere locally but not over $\zz$) does not imply a failure of ISTR for this form. In other words, the second part of \Cref{O:only-local-failures} cannot be turned into an equivalence.

Consider the form $Q = \langle 1, 2, 5, 5 \rangle$.
It is well known that $Q(\zz) = \zz_{\geq 0} \setminus \lbrace 15 \rbrace$, see e.g.~\cite{Bhargava-15}, in particular, $Q$ represents all natural numbers everywhere locally and thus over $\qq$.
\begin{thm}\label{T:15-representation}
Let $K$ be a number field such that $[K : \qq]$ is odd.
Then $15 \in \langle 1, 2, 5, 5 \rangle(\mc{O}_K)$ if and only if $K$ is not totally real.
\end{thm}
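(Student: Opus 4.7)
My plan is to prove each direction separately.

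For the ``if'' direction, when $K$ is not totally real, $Q = \langle 1,2,5,5 \rangle$ is indefinite over $\mc{O}_K$ by definition, and has rank $4$, so \Cref{T:local-global-indefinite} applies. It therefore suffices to show that $15$ is represented by $Q$ at every completion $\mc{O}_{\mf p}$ of $\mc{O}_K$. The archimedean case is immediate since $Q$ is positive definite and $15>0$. At a finite prime $\mf p$ lying above a rational prime $p$, the inclusion $\zz_p \subseteq \mc{O}_{\mf p}$ reduces the task to checking $15 \in Q(\zz_p)$ for every $p$; this is well known (the form $Q$ is locally universal, as is apparent from Bhargava's proof of the 15-theorem) and can also be verified directly via Hensel's lemma.

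For the ``only if'' direction, I argue by contradiction: assume $[K:\qq]$ is odd, $K$ is totally real, and $x^2+2y^2+5z^2+5w^2=15$ has a solution in $\mc{O}_K$. The first step is to show $z, w \in \{-1,0,1\}$. Applying each real embedding $\sigma$ of $K$ to the equation gives $5\sigma(z)^2 \leq 15$, hence $\house{z} \leq \sqrt 3 < 2$. Kronecker's theorem then forces $z = 0$ or $z = \zeta + \zeta^{-1}$ for some root of unity $\zeta$. A short enumeration over primitive $n$-th roots of unity $\zeta$ shows that $\house{\zeta + \zeta^{-1}} \leq \sqrt 3$ is possible only for $n \in \{3,4,5,6,8,10,12\}$; in every such case $z$ lies either in $\qq$ or in $\qq(\sqrt d)$ for some $d \in \{2,3,5\}$. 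Since $[K:\qq]$ is odd, $K$ contains no quadratic subfield, so $z \in \qq \cap \mc{O}_K = \zz$, and $z^2 \leq 3$ forces $z \in \{-1,0,1\}$; the same argument applies to $w$.

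Knowing $z,w \in \zz$, the equation reduces to $x^2 + 2y^2 = c$ for some $c \in \{5, 10, 15\}$, with $x, y \in \mc{O}_K$. The second key step localizes at $2$: since $[K:\qq]$ is odd and equals $\sum_{\mf p \mid 2}[K_{\mf p} : \qq_2]$, some prime $\mf p_0$ above $2$ has odd local degree. Applying \Cref{T:Springer-local} to the odd-degree extension $K_{\mf p_0}/\qq_2$ yields $c \in \langle 1,2 \rangle(\mc{O}_{\mf p_0}) \cap \zz_2 = \langle 1,2 \rangle(\zz_2)$. A routine modulo-$8$ check confirms that none of $5$, $10$, or $15$ is represented by $\langle 1, 2 \rangle$ over $\zz_2$, giving the desired contradiction.

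The step I expect to demand the most care is the Kronecker enumeration, since $\house{\zeta_7+\zeta_7^{-1}} \approx 1.802$ is tantalizingly close to $\sqrt{3}\approx 1.732$; one must verify by direct inspection that $n=7$ (and other nearby $n$) are actually ruled out. Beyond that, the proof is a clean combination of the Local Integral Springer's Theorem, the linear disjointness of odd-degree and quadratic extensions over $\qq$, and an elementary $2$-adic obstruction.
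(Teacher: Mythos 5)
Your proof is correct, but the ``only if'' direction takes a genuinely different route from the paper's. The ``if'' direction is identical: local representability of $15$ plus \Cref{T:local-global-indefinite}. For the converse, writing $15 = x^2+2y^2+5z^2+5w^2$, the paper first shows that the two coefficient-$5$ variables \emph{cannot} be rational integers (a $5$-adic obstruction: neither $10$ nor $15$ is represented by $\langle 1,2,5\rangle$ over $\qq_5$, hence not over $K$ by classical Springer), and then derives $15d > 15d$ from Siegel's lower bound $\Tr_{K/\qq}(\alpha) > \tfrac{3d}{2}$ for totally positive integers $\alpha \neq 1, \tfrac{3\pm\sqrt 5}{2}$. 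You instead show that those same variables \emph{must} be rational integers --- via the house bound $\house{z} \le \sqrt 3$, Kronecker's theorem, and the absence of quadratic subfields in an odd-degree field --- and then kill the residual equation $x^2+2y^2 \in \{5,10,15\}$ with a $2$-adic obstruction. Your approach is more elementary and explicit (it mirrors the Kronecker enumeration the paper itself uses in the $\langle 1,71\rangle$ example), at the cost of a finite enumeration of roots of unity; the paper's Siegel-trace argument avoids all casework but leans on the precise numerology $2\cdot 5\cdot\tfrac32 = 15$. Two small remarks. First, you do not need the integral local Springer theorem (\Cref{T:Springer-local}) in the last step: $5$, $10$ and $15$ already fail to be represented by $\langle 1,2\rangle$ over the \emph{field} $\qq_2$ (they lie in non-norm classes of $\qq_2(\sqrt{-2})$), so the classical field-level Springer theorem suffices. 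Second, ``a routine modulo-$8$ check'' is literally accurate only for $5$ and $15$ ($x^2+2y^2$ takes exactly the residues $\{0,1,2,3,4,6\}$ modulo $8$); ruling out $10 \equiv 2 \bmod 8$ requires one extra descent step ($x$ must be even, reducing to $2x'^2+y^2=5$, which fails modulo $8$). Neither point is a gap, and the argument as a whole is sound.
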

\begin{proof}
First assume that $K$ is totally real, and assume that $15 = w^2 + 2x^2 + 5y^2 + 5z^2$ for $w, x, y, z \in \mc{O}_K$.
We first argue why, in this case, one must have $y, z \not\in \zz$.
Suppose on the contrary that $z \in \zz$. We must then have $z^2 \in \lbrace 0, 1 \rbrace$, otherwise $15 - 5z^2$ would be negative.
In other words, either $15 = w^2 + 2x^2 + 5y^2$ or $10 = w^2 + 2x^2 + 5y^2$.
However, neither $10$ nor $15$ is represented by $\langle 1, 2, 5 \rangle$ over the field $\qq_5$, hence in particular they are not represented by $\langle 1, 2, 5 \rangle$ over $\qq$.
By the classical Springer's Theorem, they cannot be represented over $K$ either.
We obtain a contradiction and conclude that $z \not\in \zz$; by symmetry, also $y \not\in \zz$.

Now set $d = [K : \qq]$ and recall Siegel's result that, for any $\alpha \in \mc{O}^{+}_K$ different from $1$ or $\frac{3 \pm \sqrt{5}}{2}$ we have that $\Tr_{K/\qq}(\alpha) > \frac{3d}{2}$ \cite[Theorem III]{SiegelTrace}, where $\Tr_{K/\qq}$ denotes the trace in the extension $K/\qq$.
Since we excluded the possibility that $y^2, z^2 \in \lbrace 0, 1 \rbrace$ in the previous paragraph, and $\frac{3 \pm \sqrt{5}}{2}$ does not lie in $K$ since $d$ is odd, we must have $\Tr_{K/\qq}(y^2), \Tr_{K/\qq}(z^2) > \frac{3d}{2}$.
We compute that
\begin{align*}
15d &= \Tr_{K/\qq}(w^2 + 2x^2 + 5y^2 + 5z^2) = \Tr_{K/\qq}(w^2) + 2\Tr_{K/\qq}(x^2) + 5\Tr_{K/\qq}(y^2) + 5\Tr_{K/\qq}(z^2) \\
&> 0 + 0 + 5d\frac{3}{2} + 5d\frac{3}{2} = 15d,
\end{align*}
which is impossible.

Now, assume that $K$ is not totally real.
Since $15$ is everywhere locally represented by $\langle 1, 2, 5, 5 \rangle$, and $Q_{\mc{O}_K}$ is indefinite, by \Cref{T:local-global-indefinite} we have $15 \in \langle 1, 2, 5, 5 \rangle(\mc{O}_K)$.
\end{proof}

The following example is in similar spirit: We show that sometimes the failure of ISTR for a given element and a given quadratic form is rare. Note that the exceptional cubic field in this case is not $K_{49}$ which served us in many of our previous examples; it is not even a Galois extension of $\qq$:

\begin{eg}
Consider the form $Q(W,X,Y,Z) = 29W^2+X^2+2Y^2+4Z^2+XZ+YZ$. One has $145 \notin Q(\zz)$; in fact, it is one of the \enquote{escalator lattices} occurring in the proof of the 290-Theorem of Bhargava and Hanke \cite{Bhargava-Hanke}. For a totally real cubic field $K$, we show that $145 \in Q(\mc{O}_K)$ if and only if $K$ is the field with discriminant $229$.

First of all, let $\beta$ be a root of $T^3-4T-1$. Then $\qq(\beta)$ has discriminant $229$ and one can check that $145 \in Q(\zz[\beta])$ by plugging in
\[
W=\beta,\quad X=\beta^2+2\beta-5,\quad Y=2\beta^2-8,\quad Z = -\beta+1.
\]

On the other hand, assume now that $K$ is a totally real cubic field and $145 = 29w^2+x^2+2y^2+4z^2+xz+yz$. This immediately yields $\Tr_{K/\qq}(w^2) \leq 3 \cdot \frac{145}{29} = 15$. First, just as in the previous theorem, we show that $w^2 \notin \zz$: If it were, then $w^2 \in \{0,1,4\}$, so the ternary form $X^2+2Y^2+4Z^2+XZ+YZ$ would represent one of the numbers $145$, $145-29 = 4\cdot29$ and $145-4\cdot 29 = 29$ over $K$ and thus, by the classical Springer's theorem, over $\qq$. However, this is impossible as none of them is represented over $\qq_{29}$.

Thus, $w \notin \qq$, so we have $K=\qq(w)$. This in particular means that for the discriminant $\Delta(w)$ of $\zz[w]$ we have $\mathrm{disc}(K) \leq \Delta(w)$. On the other hand, the inequality \cite[Prop.~2]{KalaSmallRank}, which is a corollary of Schur's bound, yields for cubic fields
 $\Delta(w) \leq \frac{1}{2} \bigl(\Tr_{K/\qq}(w^2)\bigr)^3$. Putting all the inequalities together, we get
\[
\mathrm{disc}(K) \leq \Delta(w) \leq \frac{1}{2} \bigl(\Tr_{K/\qq}(w^2)\bigr)^3 \leq \frac{15^3}{2} = 1687.5.
\]
There are only $50$ totally real cubic fields that satisfy this bound, and for them we checked the representation numerically.
\end{eg}

We leave open several questions for further investigation.
\begin{quess} \ 
\begin{enumerate}[(1)]
\item Do there exist a number field $K$, a quadratic form $Q$ over $\mc{O}_K$, $a \in \mc{O}_K$, and an infinite collection of totally real number field extensions $L/K$ of odd degree such that $a \in Q(\mc{O}_L)$, but $a \not\in Q(\mc{O}_{L'})$ for every proper subfield $L'$ of $L$ containing $K$?
\item Given an odd-degree extension $L/K$ of totally real fields, does there always exist a quadratic form $Q$ over $\mc{O}_K$ and $a \in \mc{O}_K \cap Q(\mc{O}_L) \setminus Q(\mc{O}_K)$?
\end{enumerate}
\end{quess}
\bibliographystyle{alpha}
\bibliography{references}
\end{document}